\newtheorem{theorem}{Theorem}
\newtheorem{cor}[theorem]{Corollary}
\newtheorem{prop}[theorem]{Proposition}
\theoremstyle{remark}
\newtheorem{rem}{Remark}
\newtheorem{Example}{Example}
\newcommand{\bmu}{\boldsymbol \mu}
\newcommand{\blambda}{\boldsymbol \lambda}
\newcommand{\bzero}{\mathbf 0}
\newcommand{\cgp}{\stackrel{p}{\longrightarrow}}
\newcommand{\cgd}{\stackrel{d}{\longrightarrow}}
\newcommand{\ed}{\ \stackrel{d}{=} \ }
\newcommand{\Rbold}{\mbox{${\mathbb R}$}}
\newcommand{\Zbold}{\mbox{${\mathbb Z}$}}
\newcommand{\bE}{{\mathbf E}}
\newcommand{\bP}{{\mathbf P}}
\newcommand{\bx}{{\mathbf x}}
\newcommand{\by}{{\mathbf y}}
\newcommand{\bv}{{\mathbf v}}
\newcommand{\bu}{{\mathbf u}}
\newcommand{\eps}{\epsilon}
\title{{\bf Rate of Convergence and Large Deviation for the Infinite Color P\'olya Urn Schemes}}
\author{{\bf Antar Bandyopadhyay}\footnote{E-Mail: {\tt antar@isid.ac.in}}
        \footnote{Also affiliated with: Theoretical Statistics and Mathematics Unit, 
                                        Indian Statistical Institute, Kolkata;
                                        203 B. T. Road, Kolkata 700108, INDIA} \\ 
        {\bf Debleena Thacker}\footnote{E-Mail: {\tt thackerdebleena@gmail.com}} \vspace{0.25in} \\ 
        Theoretical Statistics and Mathematics Unit \\
         Indian Statistical Institute, Delhi Centre \\ 
         7 S. J. S. Sansanwal Marg \\
         New Delhi 110016 \\
         INDIA \vspace{0.15in} \\
				         }
\begin{document}
\maketitle
\begin{abstract}
In this work we consider the \emph{infinite color urn model} associated with a bounded increment random walk
on $\Zbold^d$. This model was first introduced in \cite{BaTh2013}. We prove that the rate of convergence of the
expected configuration of the urn at time $n$ with appropriate centering and scaling is of the order
${\mathcal O}\left(\frac{1}{\sqrt{\log n}}\right)$. Moreover we derive bounds similar to the classical Berry-Essen 
bound. Further we show that for the expected configuration a \emph{large deviation principle (LDP)} holds with 
a good rate function and speed $\log n$. 

\vspace{0.15in}
\noindent
{\bf Keywords:} \emph{Berry-Essen bound, infinite color urn, large deviation principle, rate of convergence, 
                urn models}. 

\vspace{0.15in}
\noindent
{\bf AMS 2010 Subject Classification:} \emph{Primary: 60F05, 60F10; Secondary: 60G50}.

\end{abstract}

\section{Introduction}
\label{Sec:Intro}
P\'olya urn scheme is one of the most well studied stochastic process which 
has plenty of applications in various different fields. Since the time of its introduction by
P\'olya \cite{Polya30} there has been a vast number of different variants and generalizations 
\cite{Fri49, Free65, BagPal85, Pe90, Gouet, Svante1, FlDuPu06, Pe07} studied in literature. 
In general one considers the model with finitely many colors and then it can be described simply by
\begin{quote}
Start with an urn containing finitely many balls of different colors. 
At any time $n\geq 1$, a ball is selected uniformly at random from the urn, 
and its color is noted. The selected ball is then returned to the urn along with a set of balls
of various colors which may depend on the color of the selected ball.
\end{quote}
In \cite{BlackMac73} Blackwell and MacQueen introduced a version of the model with possibly infinitely
many colors but with a very simple replacement mechanism. Recently the authors of this work
has introduced \cite{BaTh2013} a new generalization of the classical model with infinite but
countably many colors with replacement mechanism corresponding to random walks in $d$-dimension.
This generalization is essentially different than that of the classical P\'olya urn scheme, as well as
the model introduced in \cite{BlackMac73}, where the replacement mechanism is diagonal. 
The generalization by \cite{BaTh2013} considers replacement mechanism with 
non-zero off diagonal entries and provides
a novel connection between the two classical models, namely, P\'olya urn scheme
and random walks on $d$-dimensional Euclidean space has been demonstrated. In the current work
we exploit this connection to derive the \emph{rate of convergence} and 
the \emph{large deviation principle} for the
$\left(n+1\right)^{\mbox{th}}$ selected color in the infinite color generalization of the P\'olya
urn scheme. In the following subsection we 
describe the specific model which we study.

\subsection{Infinite Color Urn Model Associated with Random Walks}
\label{SubSec:Model}
Let $\left(X_j\right)_{j \geq 1}$ be i.i.d. random vectors taking values in $\Zbold^d$ with probability
mass function $p\left(\bu\right) := \bP\left(X_1 = \bu\right), \bu \in \Zbold^d$. We assume that the 
distribution
of $X_1$ is bounded, that is there exists a non-empty finite subset $B \subseteq \Zbold^d$ such that
$p\left(u\right) = 0$ for all $u \not\in B$. Throughout this paper 
we take the convention of writing all vectors as row vectors. 
Thus for a vector $\bx \in \Rbold^d$ we will write $\bx^T$ to denote it as a column vector.   
The notations  
$\langle \cdot , \cdot \rangle$ will denote 
the usual Euclidean inner product on $\Rbold^d$ and $\| \cdot \|$ the
the Euclidean norm. We will always write
\begin{equation}
\begin{array}{rcl}
\bmu & := & \bE\left[X_1\right] \\
\varSigma & := & \bE\left[ X_1^T X_1 \right] \\
e\left(\blambda\right) & := & \bE\left[e^{\langle \blambda , X_1 \rangle}\right], \, \blambda \in \Zbold^d. \\
\end{array}
\label{Equ:Basic-Notations}
\end{equation}
When the dimension $d=1$ we will denote the mean and variance simply by $\mu$ and $\sigma^2$ respectively.

Let $S_n := X_0 + X_1 + \cdots + X_n, n \geq 0$ be the random walk 
on $\Zbold^d$ starting at $X_0$ and with increments $\left(X_j\right)_{j \geq 1}$ which are independent. 
Needless to say that $\left(S_n\right)_{n \geq 0}$ is Markov chain with state-space $\Zbold^d$, 
initial distribution given by the distribution of $X_0$ and the transition matrix 
$R := \left(\left( p\left(\bu - \bv\right) \right)\right)_{u, v \in {\mathbb Z}^d}$. 

In \cite{BaTh2013} the following infinite color generalization of P\'olya urn scheme was introduced where
the colors were indexed by $\Zbold^d$.
Let $U_n := \left(U_{n,\bv}\right)_{\bv \in {\mathbb Z}^d} \in [0, \infty)^{{\mathbb Z}^d}$
denote the configuration of the urn at time $n$, that is, 
\[
\small{
\bP\left( \left(n+1\right)^{\mbox{th}} \mbox{\ selected ball has color\ } \bv 
\,\Big\vert\, U_n, U_{n-1}, \cdots, U_0 \right) 
\propto U_{n,\bv}, \, \bv \in \Zbold^d.
}
\]
Starting with $U_0$ which is a probability distribution we define $\left(U_n\right)_{n \geq 0}$
recursively as follows
\begin{equation}
\label{recurssion}
U_{n+1}=U_{n} + C_{n+1} R 
\end{equation}
where $C_{n+1} = \left(C_{n+1,\bv}\right)_{\bv \in {\mathbb Z}^d}$ is such that 
$C_{n+1,V}=1$ and $C_{n+1,\bu} = 0$ if $\bu \neq V$ where $V$ is a random color
chosen from the configuration $U_n$. In other words
\[
U_{n+1}=U_n + R_V
\]
where $R_V$ is the  $V^{\text{th}}$ row of the replacement matrix $R$. 
Following \cite{BaTh2013} we define 
the process $\left(U_n\right)_{n \geq 0}$ as the \emph{infinite color urn model} with
initial configuration $U_0$ and replacement matrix $R$. We will also refer it as the 
\emph{infinite color urn model associated with the random walk $\left(S_n\right)_{n \geq 0}$ on $\Zbold^d$}.
Throughout this paper we will assume that 
$U_0 = \left(U_{0,\bv}\right)_{\bv \in {\mathbb Z}^d}$ is such that 
$U_{0,\bv} = 0$ for all but finitely many $\bv \in \Zbold^d$.

It is worth noting that $\displaystyle{\sum_{\bu \in {\mathbb Z}^d} U_{n,\bu} = n + 1}$ for
all $n \geq 0$. So if
$Z_n$ denotes the $\left(n+1\right)^{\mbox{th}}$ selected color then
\begin{equation}
\bP\left(Z_n = \bv \,\Big\vert\, U_n, U_{n-1}, \cdots, U_0 \right) = \frac{U_{n,\bv}}{n+1}
\Rightarrow
\bP\left(Z_n = \bv \right) = \frac{\bE\left[U_{n,\bv}\right]}{n+1}.
\end{equation}
In other words the expected configuration of the urn at time $n$ is given by the distribution of $Z_n$.

\subsection{Outline of the Main Contribution of the Paper}
\label{SubSec:Outline}
In \cite{BaTh2013} the authors studied the asymptotic distribution of $Z_n$, in particular, it has been
proved (see Theorem 2.1 of \cite{BaTh2013}) that as $n \rightarrow \infty$,
\begin{equation}
\label{CLEC}
\frac{Z_{n}-\bmu \log n}{\sqrt{\log n}}\stackrel{d}{\longrightarrow}N_{d}\left(\bzero, \varSigma \right).
\end{equation}
In Section \ref{Sec:BE} we find the rate of convergence for the above asymptotic and show that 
classical Berry-Essen type bound hold at any dimension $d \geq 1$, which is of the order 
${\mathcal O}\left(\frac{1}{\sqrt{\log n}}\right)$.

It is easy to see that \eqref{CLEC} implies
\begin{equation}
\frac{Z_{n}}{\log n} \cgd \bmu \text{\ as\ } n \to \infty 
\Rightarrow
\frac{Z_{n}}{\log n} \cgp \bmu \text{\ as\ } n \to \infty.
\label{Equ:Zn-Prob-Conv}
\end{equation}
So it is then natural to ask whether the sequence of measures
$\left( \bP\left(\frac{Z_n}{\log n} \in \cdot \right)\right)_{n \geq 2}$
satisfy a \emph{large deviation principle (LDP)}. In Section \ref{Sec:LDP} we show that the above sequence of measures
satisfy a LDP with a good rate function and speed $\log n$. We also give an explicit representation
of the rate function in terms of rate function of a marked Poisson process with intensity one 
and the markings given by the i.i.d. increments $\left(X_j\right)_{j \geq 1}$. 

\subsection{Fundamental Representation}
\label{SunSec:Representation}
We end the introduction with the following very important observation made in \cite{BaTh2013} 
(see Theorem 3.1 in \cite{BaTh2013})
\begin{equation}
Z_n \ed Z_0 + \sum_{j=1}^n I_j X_j\,
\label{Equ:Representation}
\end{equation}
where 
$\left(X_j\right)_{j \geq 1}$ are as above and 
$\left(I_j\right)_{j \geq 1}$ are independent Bernoulli variables such that 
$I_j \sim \mbox{Bernoulli}\left(\frac{1}{j+1}\right)$ and are independent of $\left(X_j\right)_{j \geq 1}$.
$Z_0 \sim U_0$ and is independent of $\left(\left(X_j\right)_{j \geq 1}, \left(I_j\right)_{j \geq 1}\right)$.

Note that using this representation
the asymptotic normality \eqref{CLEC} follows immediately as an application of the 
Lindeberg Central Limit Theorem \cite{Bill86}. 
We use this representation to derive the Berry-Essen type bounds and also the LDP.

\section{Berry-Essen Bounds for the Expected Configuration}
\label{Sec:BE}
In this section we show that the rate of convergence of (\ref{CLEC}) is of the order
${\mathcal O}\left( \frac{1}{\sqrt{\log n}} \right)$. In fact we show that the Berry-Essen
type bound holds for the color of the $\left(n+1\right)^{\mbox{th}}$-selected ball.

\subsection{Berry-Essen Bound for $d=1$}
\label{SubSec:BE-1}
We first consider the case when the associated random walk is a one dimensional walk and the set of colors
are indexed by the set of integers $\Zbold$.
\begin{theorem}
\label{Thm:BE-1}
Suppose $U_0 = \delta_0$ then
\begin{equation} 
\sup_{x \in {\mathbb R}} 
\left\vert \bP\left( \frac{Z_{n}-\mu h_n}{\sqrt{n \rho_2}} \leq x\right) - \Phi\left(x\right) \right\vert 
\leq 
2.75 \times \frac{\sqrt{n} \rho_3}{\rho_2^{3/2}} = {\mathcal O} \left( \frac{1}{\sqrt{\log n}} \right),
\label{Equ:BE-1}
\end{equation}
where $\displaystyle{h_n := \sum_{j=1}^n \frac{1}{j+1}}$, $\Phi$ is the standard normal distribution 
function and
\begin{equation}
\rho_2 := \frac{1}{n} \left( \sigma^2 h_n - 
                             \mu^2 \sum_{j=1}^n \frac{1}{\left(j+1\right)^2} \right) 
\label{Equ:Def-rho2}                             
\end{equation}
and
\begin{equation}
\rho_3 := \frac{1}{n} \left(
          \sum_{j=1}^n \frac{1}{j+1} \bE\left[ \left\vert X_1 - \frac{\mu}{j+1} \right\vert^3 \right]
          + \left\vert \mu \right\vert^3 \sum_{j=1}^n \frac{j}{\left(j+1\right)^4} \right).
\label{Equ:Def-rho3}
\end{equation}
\end{theorem}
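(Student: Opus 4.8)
The plan is to apply the classical (non-identically-distributed) Berry--Esseen theorem to the representation \eqref{Equ:Representation}. Since $U_0 = \delta_0$ we have $Z_0 = 0$, so $Z_n \ed \sum_{j=1}^n I_j X_j =: \sum_{j=1}^n Y_j$, where $Y_j := I_j X_j$ are independent (but not identically distributed). First I would compute the first three moments of each summand $Y_j$. Since $I_j$ is $\mathrm{Bernoulli}(\tfrac{1}{j+1})$ independent of $X_j$, we get $\bE[Y_j] = \tfrac{\mu}{j+1}$, hence $\bE\big[\sum_j Y_j\big] = \mu h_n$, which explains the centering. For the variance, $\bE[Y_j^2] = \tfrac{1}{j+1}\bE[X_1^2] = \tfrac{\sigma^2+\mu^2}{j+1}$, so $\mathrm{Var}(Y_j) = \tfrac{\sigma^2+\mu^2}{j+1} - \tfrac{\mu^2}{(j+1)^2}$ and summing gives $\sum_j \mathrm{Var}(Y_j) = \sigma^2 h_n - \mu^2\sum_j (j+1)^{-2} = n\rho_2$, matching \eqref{Equ:Def-rho2}. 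For the centered third absolute moments, $\bE\big[|Y_j - \bE Y_j|^3\big] = \bE\big[\,|I_j X_j - \tfrac{\mu}{j+1}|^3\,\big]$; conditioning on $I_j$, the event $I_j=1$ (probability $\tfrac{1}{j+1}$) contributes $\tfrac{1}{j+1}\bE\big[|X_1 - \tfrac{\mu}{j+1}|^3\big]$ and the event $I_j=0$ (probability $\tfrac{j}{j+1}$) contributes $\tfrac{j}{j+1}\cdot\tfrac{|\mu|^3}{(j+1)^3} = \tfrac{j|\mu|^3}{(j+1)^4}$, so $\sum_j \bE\big[|Y_j - \bE Y_j|^3\big] = n\rho_3$, matching \eqref{Equ:Def-rho3}.

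With these identities in hand, the Berry--Esseen theorem for independent summands (with the explicit constant $0.56$ of Shevtsova, or the more classical $2.75$-type constant used here — I would cite whichever reference the paper uses, e.g.\ \cite{Bill86} or a sharper source) states that
\begin{equation}
\sup_{x\in\Rbold}\left\vert \bP\!\left(\frac{\sum_{j=1}^n(Y_j - \bE Y_j)}{\sqrt{\sum_{j=1}^n \mathrm{Var}(Y_j)}}\leq x\right) - \Phi(x)\right\vert
\;\leq\; C\,\frac{\sum_{j=1}^n \bE\big[|Y_j-\bE Y_j|^3\big]}{\big(\sum_{j=1}^n \mathrm{Var}(Y_j)\big)^{3/2}}.
\label{Equ:BE-generic}
\end{equation}
Substituting the three moment computations, the numerator is $C\,n\rho_3$ and the denominator is $(n\rho_2)^{3/2}$, so the bound becomes $C\,\tfrac{n\rho_3}{(n\rho_2)^{3/2}} = C\,\tfrac{\rho_3}{\sqrt{n}\,\rho_2^{3/2}}$. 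Wait --- this needs the scaling to be checked against the claimed $\tfrac{\sqrt{n}\,\rho_3}{\rho_2^{3/2}}$ in \eqref{Equ:BE-1}; I would reconcile the powers of $n$ carefully, noting that in \eqref{Equ:BE-1} the quantities $\rho_2,\rho_3$ are already normalized by $\tfrac1n$, so $\sum_j \mathrm{Var}(Y_j) = n\rho_2$ and $\sum_j \bE|Y_j-\bE Y_j|^3 = n\rho_3$ give ratio $n\rho_3/(n\rho_2)^{3/2} = \rho_3/(\sqrt n\,\rho_2^{3/2})$, and the displayed $\tfrac{\sqrt n\,\rho_3}{\rho_2^{3/2}}$ must come from a different normalization convention for $\rho_3$ (likely $\rho_3$ as written above without the $\tfrac1n$, or an $n^2$ versus $n$ bookkeeping) --- this is exactly the kind of constant-chasing detail I would pin down against the paper's own definitions rather than guessing. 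The centering $\mu h_n$ and scaling $\sqrt{n\rho_2}$ in \eqref{Equ:BE-1} are precisely $\bE[\sum Y_j]$ and $\sqrt{\mathrm{Var}(\sum Y_j)}$, so \eqref{Equ:BE-generic} applies verbatim.

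Finally I would establish the asymptotic order statement. Since $h_n = \log n + O(1)$ and $\sum_j (j+1)^{-2} = O(1)$, we get $n\rho_2 = \sigma^2\log n + O(1)$, i.e.\ $\rho_2 \sim \tfrac{\sigma^2 \log n}{n}$. Because $X_1$ is bounded, $\bE[|X_1 - \tfrac{\mu}{j+1}|^3]$ is uniformly bounded in $j$, so $\sum_j \tfrac{1}{j+1}\bE[|X_1 - \tfrac{\mu}{j+1}|^3] = O(\log n)$ and $\sum_j \tfrac{j}{(j+1)^4} = O(1)$, whence $n\rho_3 = O(\log n)$. Therefore the right-hand side of \eqref{Equ:BE-1} is of order $\tfrac{\log n}{(\log n)^{3/2}} = \tfrac{1}{\sqrt{\log n}}$, completing the proof. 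The main obstacle is not conceptual but bookkeeping: getting the explicit constant $2.75$ and the exact powers of $n$ in \eqref{Equ:BE-1} to agree with the normalization built into \eqref{Equ:Def-rho2}--\eqref{Equ:Def-rho3}, and confirming that the Berry--Esseen constant quoted is the one valid for the non-i.i.d.\ case (the $2.75$ is presumably the classical Esseen/van Beek-type constant, which I would state with a precise citation).
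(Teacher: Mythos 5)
Your proposal is correct and follows essentially the same route as the paper: invoke the representation $Z_n \ed \sum_{j=1}^n I_j X_j$, identify $\mu h_n = \sum_j \bE[I_jX_j]$, $n\rho_2 = \sum_j \mathrm{Var}(I_jX_j)$ and $n\rho_3 = \sum_j \bE\left[\left\vert I_jX_j - \bE[I_jX_j]\right\vert^3\right]$, and apply the Berry--Esseen theorem for independent non-identically distributed summands (the paper cites Theorem 12.4 of Bhattacharya and Rao for the constant $2.75$). Your worry about the powers of $n$ is well founded and you resolved it on the correct side: the generic bound gives $n\rho_3/(n\rho_2)^{3/2} = \rho_3/(\sqrt{n}\,\rho_2^{3/2})$, and only this form is consistent with the claimed order, since with $n\rho_2 \sim C_1\log n$ and $n\rho_3 \sim C_2\log n$ one gets $\rho_3/(\sqrt{n}\,\rho_2^{3/2}) = {\mathcal O}(1/\sqrt{\log n})$ whereas the displayed $\sqrt{n}\,\rho_3/\rho_2^{3/2}$ would be of order $n/\sqrt{\log n}$; the paper's display (repeated verbatim in its proof) thus appears to carry a misplaced $\sqrt{n}$, and your version is the correct one.
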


\begin{proof}
We first note that when $U_0 = \delta_0$ then \eqref{Equ:Representation} can be written as
\begin{equation}
Z_n \ed \sum_{j=1}^n I_j X_j\,
\label{Equ:Representation-0}
\end{equation}
where $\left(X_j\right)_{j \geq 1}$ are i.i.d. increments of the random walk $\left(S_n\right)_{n \geq 0}$,
$\left(I_j\right)_{j \geq 1}$ are independent Bernoulli variables such that 
$I_j \sim \mbox{Bernoulli}\left(\frac{1}{j+1}\right)$ and are independent of $\left(X_j\right)_{j \geq 1}$.

Now observe that 
\[
n \rho_2 = \sum_{j=1}^n \bE\left[ \left(I_j X_j - \bE\left[I_j X_j\right] \right)^2\right]
\mbox{\ and\ } 
n \rho_3 = \sum_{j=1}^n \bE\left[ \left\vert I_j X_j - \bE\left[I_j X_j\right] \right\vert^3\right].
\]
Thus from the \emph{Berry-Essen Theorem} for the independent but non-identical increments
(see Theorem 12.4 of \cite{BhRa76}) we get
\begin{equation}
\sup_{x \in {\mathbb R}} 
\left\vert \bP\left( \frac{\sum_{j=1}^n I_j X_j -\mu h_n}{\sqrt{n \rho_2}} \leq x\right) 
- \Phi\left(x\right) \right\vert 
\leq 
2.75 \times \frac{\sqrt{n} \rho_3}{\rho_2^{3/2}}.
\label{Equ:BE-Intermediate}
\end{equation}
The equations \eqref{Equ:Representation-0} and \eqref{Equ:BE-Intermediate} implies the inequality in 
\eqref{Equ:BE-1}.

Finally to prove the last part of the equation \eqref{Equ:BE-1} we note that from definition
$n \rho_2 \sim C_1 \log n$ and $n \rho_3 \sim C_2 \log n$ where $0 < C_1, C_2 < \infty$ are some constants.
Thus
\[
\frac{\sqrt{n} \rho_3}{\rho_2^{3/2}} = {\mathcal O}\left(\frac{1}{\sqrt{\log n}}\right).
\]
This completes the proof of the theorem. 
\end{proof}

Following result follows easily from the above theorem by observing the facts $h_n \sim \log n$ and
$n \rho_2 \sim C_1 \log n$. 
\begin{theorem}
\label{Thm:BE-1-General}
Suppose $U_{0,k} = 0$ for all but finitely many $k \in \Zbold$ then
there exists a constant $C > 0$ such that
\begin{equation} 
\sup_{x \in {\mathbb R}} 
\left\vert \bP\left( \frac{Z_{n}-\mu \log n}{\sigma \sqrt{\log n}} \leq x \right) 
- \Phi\left(x\right) \right\vert 
\leq 
C \times \frac{\sqrt{n} \rho_3}{\rho_2^{3/2}} = {\mathcal O} \left( \frac{1}{\sqrt{\log n}} \right),
\label{Equ:BE-1-General}
\end{equation}
$\Phi$ is the standard normal distribution function and
$\rho_2$ and $\rho_3$ are as defined in \eqref{Equ:Def-rho2} and \eqref{Equ:Def-rho3} respectively.
\end{theorem}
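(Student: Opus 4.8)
The plan is to reduce Theorem~\ref{Thm:BE-1-General} to Theorem~\ref{Thm:BE-1} by handling the finitely many extra initial balls as a bounded additive perturbation and by replacing the centering/scaling constants $\mu h_n$, $\sqrt{n\rho_2}$ with the cleaner asymptotic versions $\mu\log n$, $\sigma\sqrt{\log n}$. By the fundamental representation~\eqref{Equ:Representation}, write $Z_n \ed Z_0 + W_n$ where $W_n := \sum_{j=1}^n I_j X_j$ is exactly the quantity controlled in Theorem~\ref{Thm:BE-1}, and $Z_0 \sim U_0$ is an independent random variable supported on a fixed finite set, hence $|Z_0| \leq M$ almost surely for some constant $M$ depending only on $U_0$.

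The first step is to pass from $W_n$ (for which we have the Berry--Essen bound with centering $\mu h_n$ and scale $\sqrt{n\rho_2}$) to $Z_n = Z_0 + W_n$ with centering $\mu\log n$ and scale $\sigma\sqrt{\log n}$. I would write
\[
\frac{Z_n - \mu\log n}{\sigma\sqrt{\log n}}
= \frac{\sqrt{n\rho_2}}{\sigma\sqrt{\log n}}\cdot\frac{W_n - \mu h_n}{\sqrt{n\rho_2}}
 + \frac{Z_0 + \mu h_n - \mu\log n}{\sigma\sqrt{\log n}},
\]
and note that by \eqref{Equ:Def-rho2} one has $n\rho_2 = \sigma^2 h_n - \mu^2\sum_{j\le n}(j+1)^{-2} = \sigma^2\log n + O(1)$, so the multiplicative factor $a_n := \sqrt{n\rho_2}/(\sigma\sqrt{\log n})$ satisfies $a_n = 1 + O(1/\log n)$, while the additive term $b_n := (Z_0 + \mu h_n - \mu\log n)/(\sigma\sqrt{\log n})$ is bounded by $O(1/\sqrt{\log n})$ since $|Z_0|\le M$, $h_n = \log n + O(1)$, and $h_n - \log n$ is in fact convergent (to a constant related to $1-\gamma$).

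The second step is the standard lemma that for a random variable $Y$ with a Berry--Essen bound $\sup_x|\bP(Y\le x)-\Phi(x)|\le\delta$, and for constants $a_n\to 1$, $b_n\to 0$, one has $\sup_x|\bP(a_nY+b_n\le x)-\Phi(x)| \le \delta + |a_n-1|\cdot c + |b_n|\cdot c$ for an absolute constant $c$ coming from the bounded density $\phi$ of the normal and its boundedness on compact sets (one splits: replace $\Phi$ by $\Phi((x-b_n)/a_n)$ using the Berry--Essen bound on $Y$, then control $|\Phi((x-b_n)/a_n)-\Phi(x)|$ uniformly in $x$ by a mean value estimate, treating the tails $|x|$ large separately where both quantities are within $\delta$ of $0$ or $1$). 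Applying this with $Y = (W_n-\mu h_n)/\sqrt{n\rho_2}$, $\delta = 2.75\sqrt{n}\rho_3/\rho_2^{3/2}$, and the $a_n, b_n$ above, and conditioning on the independent $Z_0$ (so the additive perturbation is handled uniformly over its finitely many values), yields
\[
\sup_{x\in\Rbold}\left|\bP\!\left(\frac{Z_n-\mu\log n}{\sigma\sqrt{\log n}}\le x\right)-\Phi(x)\right|
\le 2.75\,\frac{\sqrt{n}\rho_3}{\rho_2^{3/2}} + O\!\left(\frac{1}{\sqrt{\log n}}\right).
\]
Finally, since $\rho_3$ is an average of third moments of the bounded random variables $I_jX_j$ recentered, $n\rho_3 = C_2\log n + O(1)$ with $C_2>0$, and together with $n\rho_2 = \sigma^2\log n + O(1)$ this gives $\sqrt{n}\rho_3/\rho_2^{3/2} = (n\rho_3)/(n\rho_2)^{3/2}\cdot\sqrt{n}\cdot n = \Theta(1/\sqrt{\log n})$; absorbing all the $O(1/\sqrt{\log n})$ terms into a single constant $C$ yields the claimed bound, completing the proof.

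The main obstacle is purely bookkeeping: making the ``constants $a_n\to1$, $b_n\to0$'' perturbation lemma quantitative with an \emph{absolute} constant so that the final bound is genuinely $C\sqrt{n}\rho_3/\rho_2^{3/2}$ rather than merely $O(1/\sqrt{\log n})$ — this requires care near $x = \pm\infty$ where $\Phi$ is flat and the naive mean-value bound on $|\Phi((x-b_n)/a_n)-\Phi(x)|$ degenerates, so one must split into the compact regime (where $\phi$ is bounded below away from zero is not needed — only bounded above) and the tail regime (where monotonicity and the existing $\delta$-bound suffice). Everything else is routine asymptotics of $h_n$, $n\rho_2$, and $n\rho_3$.
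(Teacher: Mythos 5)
Your proposal is correct and takes essentially the same route as the paper, whose entire proof of this theorem is the remark that it ``follows easily from Theorem \ref{Thm:BE-1} by observing $h_n \sim \log n$ and $n\rho_2 \sim C_1 \log n$'': your decomposition $Z_n \ed Z_0 + W_n$, the quantitative perturbation lemma for $a_n Y + b_n$ with $a_n \to 1$, $b_n \to 0$, and the conditioning on the finitely supported $Z_0$ are precisely the details the paper elides. One caveat: your closing chain of equalities $\sqrt{n}\rho_3/\rho_2^{3/2} = (n\rho_3)/(n\rho_2)^{3/2}\cdot\sqrt{n}\cdot n = \Theta(1/\sqrt{\log n})$ is not a valid identity --- with the $1/n$ prefactors in \eqref{Equ:Def-rho2} and \eqref{Equ:Def-rho3} one actually has $\sqrt{n}\rho_3/\rho_2^{3/2} = n \cdot (n\rho_3)/(n\rho_2)^{3/2} = \Theta(n/\sqrt{\log n})$, and the quantity the Berry--Essen theorem genuinely produces and which is $\Theta(1/\sqrt{\log n})$ is the Lyapunov ratio $(n\rho_3)/(n\rho_2)^{3/2} = \rho_3/(\sqrt{n}\,\rho_2^{3/2})$; this is a typo inherited from the paper's own statement rather than a flaw in your argument, but it should be fixed rather than papered over.
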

It is worth noting that unlike in Theorem \ref{Thm:BE-1}
the constant $C$ which appears in \eqref{Equ:BE-1-General} above, is not a universal constant, it may
depend on the increment distribution, as well as on $U_0$.

\subsection{Berry-Essen bound for $d \geq 2$}
\label{SubSec:BE-d}
We now consider the case when the associated random walk is $d \geq 2$ dimensional and the colors are
indexed by $\Zbold^d$. Before we present our main result we introduce few notations. 

{\bf Notations:} For a vector $\bx \in \Rbold^d$ we will write the coordinates as  
$\left(x^{(1)}, x^{(2)}, \cdots, x^{(d)}\right)$. For example the coordinates of
$\bmu$ will be written as $\left(\mu^{(1)}, \mu^{(2)}, \cdots, \mu^{(d)}\right)$.
For a matrix $A = \left(\left(a_{ij}\right)\right)_{1\leq i,j\leq d }$ we denote by 
$A\left(i,j\right)$ the $(d-1) \times (d-1)$ sub-matrix of $A$, obtained by deleting the $i^{\text{th}}$ 
row and $j^{\text{th}}$ column.  
Let
\begin{equation}
\rho_2^{(d)} := \frac{1}{n} \sum_{j=1}^{n} \frac{1}{(j+1)}\frac{\det\left(\varSigma -\frac{1}{j+1}M\right)}
                    {\det\left(\varSigma(1,1)-\frac{1}{j+1}M(1,1)\right)},
\label{Equ:Def-rho2-d}
\end{equation}
where $M := \left(\left(\mu^{(i)} \mu^{(j)}\right)\right)_{1 \leq i, j \leq d}$ and
\begin{equation}
\rho_3^{(d)} := \frac{1}{n d} \sum_{j=1}^n \sum_{i=1}^{d}\gamma^{3}_{n}\left(i\right) 
                                                                   \beta_{j}\left(i\right),
\label{Equ:Def-rho3-d}                                                                   
\end{equation}
where 
\[
\gamma^{2}_{n}(i) :=
\max_{1\leq j \leq n} 
\frac{\det\left(\varSigma(i,i)-\frac{1}{(j+1)}M(i,i)\right)}
     {\det\left(\varSigma(1,1)-\frac{1}{j+1}M(1,1)\right)}
\]
and
\[ 
\beta_{j}(i) = 
\frac{1}{j+1} \bE\left[ \left\vert X_1^{(i)} - \frac{\mu^{(i)}}{j+1} \right\vert^3 \right]
+ \frac{j}{\left(j+1\right)^4} \left\vert \mu^{(i)} \right\vert^3.
\]
For any two vectors $\bx$ and $\by \in \Rbold^d$ we will write $\bx \leq \by$, if the inequality holds 
coordinate wise. Finally for a positive definite matrix $B$,
we write $B^{1/2}$ for the unique positive definite square root of it.

\begin{theorem}
\label{Thm:BE-d}
Suppose $U_0 = \delta_{\bzero}$ then there exists an universal constant $C\left(d\right) > 0$ which 
may depend on the dimension $d$ such that
\begin{equation}
\sup_{\bx \in {\mathbb R}^d} 
\left\vert \bP\left( \left(Z_{n}-\bmu h_n\right) \varSigma_n^{-1/2} \leq \bx \right) 
- \Phi_d\left(\bx\right) \right\vert 
\leq 
C\left(d\right) \frac{\sqrt{n} \rho_3^{(d)}}{\left(\rho_2^{(d)}\right)^{3/2}} 
= {\mathcal O} \left( \frac{1}{\sqrt{\log n}} \right),
\label{Equ:BE-d}
\end{equation}
where $\varSigma_n := \sum_{j=1}^n \frac{1}{j+1} \left(\varSigma - \frac{1}{j+1} M\right)$ and
$\Phi_d$ is the distribution function of a standard $d$-dimensional normal random vector.
\end{theorem}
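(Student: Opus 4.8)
The plan is to reduce the statement, via the representation \eqref{Equ:Representation}, to a quantitative central limit theorem in $\Rbold^d$ for a sum of independent, uniformly bounded random vectors, to invoke a multidimensional Berry--Esseen inequality, and then to track the order of growth of the relevant second and third moments.

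First I would note that with $U_0 = \delta_{\bzero}$ one has $Z_0 = \bzero$, so \eqref{Equ:Representation} gives $Z_n \ed T_n := \sum_{j=1}^n I_j X_j$, a sum of independent $\Zbold^d$-valued vectors $\xi_j := I_j X_j$; since $X_1$ has bounded support these summands are uniformly bounded and hence have moments of all orders. A direct computation gives $\bE[\xi_j] = \tfrac{1}{j+1}\bmu$, so $\sum_{j=1}^n\bE[\xi_j] = \bmu h_n$ (the centering in \eqref{Equ:BE-d}), and $\mathrm{Cov}(\xi_j) = \tfrac{1}{j+1}\varSigma - \tfrac{1}{(j+1)^2}M = \tfrac{1}{j+1}\bigl(\varSigma - \tfrac{1}{j+1}M\bigr)$, whence $\mathrm{Cov}(T_n) = \varSigma_n$. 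Throughout one assumes $\varSigma$ is positive definite (equivalently, the support of $X_1$ spans $\Rbold^d$); otherwise one restricts everything to the linear span of that support.

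Next I would apply a multidimensional Berry--Esseen bound for sums of independent (not necessarily identically distributed) random vectors over the class of lower quadrants $\{\by : \by \le \bx\}$ --- for instance the estimates of Bhattacharya and Rao \cite{BhRa76} already used for $d=1$ --- to the normalized sum $(T_n - \bmu h_n)\varSigma_n^{-1/2}$, which has the identity covariance, so that its Gaussian target is exactly $\Phi_d$. Such an inequality is $C(d)$ times a Lyapunov-type ratio of the third absolute moments of the summands to a power of the smallest relevant scale of $\varSigma_n$, and the heart of the argument is to bound that ratio by $C(d)\sqrt{n}\,\rho_3^{(d)}/(\rho_2^{(d)})^{3/2}$. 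The determinant ratios are forced here: $\det(\varSigma - \tfrac{1}{j+1}M)/\det(\varSigma(1,1) - \tfrac{1}{j+1}M(1,1))$ is, up to the factor $\tfrac{1}{j+1}$, the Schur complement of $\mathrm{Cov}(\xi_j)$ relative to its $(1,1)$-minor, i.e. $\min\{\bv\,\mathrm{Cov}(\xi_j)\,\bv^{T}: \bv\in\Rbold^d,\ v^{(1)}=1\}$, and by the superadditivity $\min\{\bv\varSigma_n\bv^{T}: v^{(1)}=1\}\ge\sum_{j=1}^n\min\{\bv\,\mathrm{Cov}(\xi_j)\,\bv^{T}: v^{(1)}=1\}$ the sum $n\rho_2^{(d)}$ controls the relevant lower scale of $\varSigma_n$; the factor $\gamma_n(i)$ records how the spread of $\varSigma_n$ in the $i$-th coordinate direction compares with this reference scale, so that with $\beta_j(i)=\bE|\xi_j^{(i)}-\bE[\xi_j^{(i)}]|^3$ and the elementary bound $\|\bv\|^3\le d^{1/2}\sum_i|v^{(i)}|^3$ the sum $nd\,\rho_3^{(d)}=\sum_{j=1}^n\sum_{i=1}^d\gamma_n^3(i)\beta_j(i)$ dominates the normalized third-moment sum $\sum_{j=1}^n\bE\bigl\|(\xi_j-\bE[\xi_j])\varSigma_n^{-1/2}\bigr\|^3$. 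An alternative that makes the $(1,1)$-minor transparent is an induction on $d$: conditioning on the last $d-1$ coordinates, apply the $(d-1)$-dimensional bound to them and, conditionally, the one-dimensional estimate of Theorem \ref{Thm:BE-1} to the first coordinate, whose conditional Gaussian variance is exactly the Schur complement above.

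Finally I would run the asymptotics. Because $\tfrac{1}{j+1}M\to\bzero$, the ratio $\det(\varSigma-\tfrac{1}{j+1}M)/\det(\varSigma(1,1)-\tfrac{1}{j+1}M(1,1))$ tends to $\det(\varSigma)/\det(\varSigma(1,1))>0$, so $n\rho_2^{(d)}=\sum_{j=1}^n\tfrac{1}{j+1}\cdot(\text{a bounded sequence bounded away from }0)\sim c_1\log n$; likewise $\gamma_n(i)$ is bounded uniformly in $n$ and $\beta_j(i)\asymp\tfrac{1}{j+1}$, so $nd\,\rho_3^{(d)}\sim c_2\log n$. Hence the Berry--Esseen ratio has a numerator of order $\log n$ and a denominator of order $(\log n)^{3/2}$, i.e. it is $\mathcal{O}(1/\sqrt{\log n})$, which gives \eqref{Equ:BE-d}. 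The hard part is the passage from a black-box multidimensional Berry--Esseen inequality to the precise quantities $\rho_2^{(d)}$ and $\rho_3^{(d)}$ --- correctly accounting for the $\varSigma_n^{-1/2}$ normalization by controlling, direction by direction, the effective scale of the normalized summands, which is exactly where the Schur complements and the factors $\gamma_n(i)$ are unavoidable --- and keeping the dimensional constant $C(d)$ clean; the representation step and the asymptotic analysis are routine by comparison.
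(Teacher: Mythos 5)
Your proposal is correct and follows essentially the same route as the paper: reduce via the representation \eqref{Equ:Representation} to the independent, non-identically distributed summands $I_j X_j$, identify $\bmu h_n$ and $\varSigma_n$ as their exact mean and covariance, apply a $d$-dimensional Berry--Esseen inequality for such sums, and extract the ${\mathcal O}\left(1/\sqrt{\log n}\right)$ rate from $n\rho_2^{(d)} \sim C_1' \log n$ and $n\rho_3^{(d)} \sim C_2' \log n$. The only difference is that the ``hard part'' you flag --- recovering the determinant ratios (Schur complements) and the factors $\gamma_n(i)$ from a black-box multidimensional bound --- is bypassed in the paper by citing equation (D) of Bergstr\"om \cite{Bergstrom49}, which states the inequality in exactly the form \eqref{Equ:BE-d} with the quantities $\rho_2^{(d)}$ and $\rho_3^{(d)}$ built in.
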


\begin{proof} 
Like in the one dimensional case, we start by observing that
when $U_0 = \delta_0$ then \eqref{Equ:Representation} can be written as
\begin{equation}
Z_n \ed \sum_{j=1}^n I_j X_j\,
\label{Equ:Representation-0-d}
\end{equation}
where $\left(X_j\right)_{j \geq 1}$ are i.i.d. increments of the random walk $\left(S_n\right)_{n \geq 0}$,
$\left(I_j\right)_{j \geq 1}$ are independent Bernoulli variables such that 
$I_j \sim \mbox{Bernoulli}\left(\frac{1}{j+1}\right)$ and are independent of $\left(X_j\right)_{j \geq 1}$.

Now the proof of the inequality in \eqref{Equ:BE-d} follows from equation (D) of \cite{Bergstrom49}
which deals with $d$-dimensional version of the classical Berry-Essen inequality for independent but
non-identical summands, which in our case are the random variables $\left(I_j X_j\right)_{j \geq 1}$. 
It is enough to notice that
\[
\beta_{j}(i) = \bE\left[ \left\vert I_j X_1^{(i)} - \bE\left[I_j X_j^{(i)}\right] \right\vert^3 \right],
\]
and
\[
\varSigma_n = \sum_{j=1}^n 
\bE\left[ 
\left( I_j X_j - \bE\left[I_j X_j\right]\right)^T \left(I_j X_j - \bE\left[I_j X_j\right] \right)
\right].
\]

Finally to prove the last part of the equation \eqref{Equ:BE-d} just like in the one dimensional case
we note that from definition
$n \rho_2^{(d)} \sim C_1' \log n$ and $n \rho_3^{(d)} \sim C_2' \log n$ where 
$0 < C_1', C_2' < \infty$ are some constants. Thus
\[
\frac{\sqrt{n} \rho_3}{\rho_2^{3/2}} = {\mathcal O}\left(\frac{1}{\sqrt{\log n}}\right).
\]
This completes the proof of the theorem. 
\end{proof}

\begin{rem} 
If we define that $\varSigma\left(1,1\right) = 1$ and $M\left(1,1\right)=0$ when $d=1$ then Theorem 
\ref{Thm:BE-1} follows from the above theorem except in Theorem \ref{Thm:BE-1} the constant is more explicit. 
\end{rem}

Just like in the one dimensional case the following result follows easily from the above theorem
by observing $h_n \sim \log n$. 
\begin{theorem}
\label{Thm:BE-d-General}
Suppose $U_0 = \left(U_{0,\bv}\right)_{\bv \in {\mathbb Z}^d}$ is such that 
$U_{0,\bv} = 0$ for all but finitely many $\bv \in \Zbold^d$ then there exists a constant 
$C > 0$ which may depend on the increment distribution, such that
\begin{equation} 
\sup_{\bx \in {\mathbb R}^d} 
\left\vert \bP\left( \left(\frac{Z_{n}-\bmu \log n}{\sqrt{\log n}} \right) \varSigma^{-1/2} \leq \bx \right) 
- \Phi_d\left(\bx\right) \right\vert 
\leq 
C \times \frac{\sqrt{n} \rho_3^{(d)}}{\left(\rho_2^{(d)}\right)^{3/2}} 
= {\mathcal O} \left( \frac{1}{\sqrt{\log n}} \right),
\label{Equ:BE-d-General}
\end{equation}
where $\Phi_d$ is the distribution function of a standard $d$-dimensional normal random vector.
\end{theorem}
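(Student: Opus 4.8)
The plan is to reduce Theorem~\ref{Thm:BE-d-General} to Theorem~\ref{Thm:BE-d} by absorbing the general initial configuration $U_0$ into a finite additive perturbation and then replacing the centering $\bmu h_n$ and scaling $\varSigma_n^{-1/2}$ by their asymptotic surrogates $\bmu \log n$ and $\sqrt{\log n}\,\varSigma^{1/2}$. First I would invoke the fundamental representation \eqref{Equ:Representation}, writing $Z_n \ed Z_0 + \sum_{j=1}^n I_j X_j$ with $Z_0 \sim U_0$ independent of everything else. Since $U_{0,\bv}=0$ for all but finitely many $\bv$, the random vector $Z_0$ is almost surely bounded, say $\|Z_0\| \leq K$ for a deterministic constant $K$ depending only on $U_0$. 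Conditioning on $Z_0$ and applying Theorem~\ref{Thm:BE-d} to $W_n := \sum_{j=1}^n I_j X_j$, whose law equals that of $Z_n$ when $U_0 = \delta_{\bzero}$, gives a Kolmogorov-type bound for $W_n$; the contribution of the bounded shift $Z_0$ to the supremum distance is $O(1/\sqrt{\log n})$ because the limiting Gaussian density $\Phi_d$ is Lipschitz and the shift, after dividing by the scaling $\sqrt{\log n}$, has magnitude $O(1/\sqrt{\log n})$.

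The second step is the change of normalization. Here I would use the elementary fact, already noted in the proof of Theorem~\ref{Thm:BE-d}, that $\varSigma_n = \sum_{j=1}^n \frac{1}{j+1}\bigl(\varSigma - \frac{1}{j+1} M\bigr) = h_n \varSigma - \bigl(\sum_{j=1}^n \frac{1}{(j+1)^2}\bigr) M$, so that $\varSigma_n = \log n \cdot \varSigma + O(1)$ as a matrix, where the $O(1)$ correction comes from $h_n - \log n = O(1)$ and from the bounded series $\sum (j+1)^{-2}$. Consequently $\varSigma_n^{-1/2} = (\log n)^{-1/2}\varSigma^{-1/2}\bigl(I + O(1/\log n)\bigr)$, and $\bmu h_n - \bmu \log n = O(1)$. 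Feeding these two expansions into the linear map $\bx \mapsto (\bx - \bmu \log n)(\log n)^{-1/2}\varSigma^{-1/2}$ versus $\bx \mapsto (\bx - \bmu h_n)\varSigma_n^{-1/2}$ shows the two standardized vectors differ by an additive term of size $O(1/\sqrt{\log n})$ plus a multiplicative distortion $I + O(1/\log n)$; again using the Lipschitz property and boundedness of $\Phi_d$ and its partial derivatives, the change of normalization costs only $O(1/\sqrt{\log n})$ in the Kolmogorov distance. Combining with the bound from the first step and with the already-established asymptotics $n\rho_2^{(d)} \sim C_1'\log n$, $n\rho_3^{(d)} \sim C_2'\log n$ (so that $\sqrt{n}\rho_3^{(d)}/(\rho_2^{(d)})^{3/2} = O(1/\sqrt{\log n})$) yields \eqref{Equ:BE-d-General}, with a constant $C$ that now depends on $U_0$ and the increment law, as claimed in the remark following Theorem~\ref{Thm:BE-1-General}.

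The main obstacle I anticipate is making the ``change of normalization'' step genuinely quantitative at the level of the \emph{supremum over orthants}: comparing $\bP(A W_n \leq \bx)$ with $\bP(A' W_n \leq \bx)$ for two nearby invertible linear maps $A, A'$ is not completely automatic, since a small perturbation of a linear map can move the preimage of an orthant corner in a way that is not controlled purely by a one-dimensional Berry--Esseen bound. The clean way around this is to write $A' W_n = A W_n + (A' - A) W_n$, observe that $\|(A'-A)W_n\| = O_P(1/\sqrt{\log n})$ by the expansions above together with $\|W_n\| = O_P(\sqrt{\log n})$ (a second-moment estimate using $\varSigma_n$), and then use the standard smoothing/concentration argument: for any $\eta>0$,
\[
\bP(A'W_n \leq \bx) \leq \bP(A W_n \leq \bx + \eta\mathbf 1) + \bP\bigl(\|(A'-A)W_n\|_\infty > \eta\bigr),
\]
bound the first term by $\Phi_d(\bx+\eta\mathbf 1) + C(d)\sqrt{n}\rho_3^{(d)}/(\rho_2^{(d)})^{3/2}$ via Theorem~\ref{Thm:BE-d} (suitably re-standardized), use $|\Phi_d(\bx+\eta\mathbf 1) - \Phi_d(\bx)| \leq d\,\eta/\sqrt{2\pi}$, bound the second term by Markov's inequality as $O(1/(\eta^2\log n))$, and finally optimize by taking $\eta$ of order $(\log n)^{-1/2}$. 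The reverse inequality is symmetric. Everything else — the boundedness of $Z_0$, the matrix expansion of $\varSigma_n$, the asymptotics of $\rho_2^{(d)}$ and $\rho_3^{(d)}$ — is routine bookkeeping.
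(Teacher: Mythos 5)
Your proposal is correct and follows the same route the paper intends: the paper's own ``proof'' of Theorem~\ref{Thm:BE-d-General} is the single remark that it ``follows easily from the above theorem by observing $h_n \sim \log n$,'' and what you have written is precisely the missing bookkeeping --- absorbing the bounded $Z_0$ via the representation \eqref{Equ:Representation}, expanding $\varSigma_n = \log n \cdot \varSigma + O(1)$, and transferring the Berry--Esseen bound across the change of normalization by a smoothing argument. One quantitative slip to fix in your final step: the perturbation $(A'-A)$ should be applied to the \emph{centered} sum $W_n - \bmu h_n$ (whose second moment is $\mathrm{tr}(\varSigma_n) = O(\log n)$), not to $W_n$ itself, which has norm of order $\log n$ from the drift when $\bmu \neq \bzero$; with the centered variable, $\|A'-A\| = O((\log n)^{-3/2})$ gives a Markov bound $O\bigl(1/(\eta^2 (\log n)^{2})\bigr)$, which at $\eta \asymp (\log n)^{-1/2}$ yields $O(1/\log n)$, whereas the $O\bigl(1/(\eta^{2}\log n)\bigr)$ you wrote would only give $O(1)$ at that choice of $\eta$ and the optimization would not close.
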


\section{Large Deviations for the Expected Configuration}
\label{Sec:LDP}
In this section we discuss the asymptotic  behavior of the tail probabilities of $\frac{Z_{n}}{\log n}$.
Following standard notations are used in rest of the paper. For any subset $A \subseteq \Rbold^d$ we write 
$A^{\circ}$ to denote the \emph{interior} of $A$ and $\bar{A}$ to denote the \emph{closer} of $A$ under the
usual Euclidean metric. 
\begin{theorem}
\label{Thm:LDP} 
The sequence of
measures $\bP\left(\frac{Z_n}{\log n} \in \cdot \right)_{n \geq 2}$ satisfy a LDP
with rate function $I\left(\cdot\right)$ and speed $\log n$, that is, 
\begin{equation}
\small{
- \inf_{\bx \in  A^{\circ}} I\left(\bx\right) \leq 
\mathop{\underline{\lim}}\limits_{n \to \infty} \frac{\log \bP\left(\frac{Z_{n}}{\log n}\in A\right)}{\log n} \leq 
\mathop{\overline{\lim}}\limits_{n \to \infty} \frac{\log \bP\left(\frac{Z_{n}}{\log n}\in A\right)}{\log n} \leq 
- \inf_{\bx \in \bar{A}} I\left(\bx\right)
}
\label{Equ:LDP}
\end{equation} 
\normalsize
where $I(\cdot)$ is the Fenchel-Legendre dual of $e\left(\cdot\right)-1$, that is for $x \in \mathbb{R}^{d}$,
\begin{equation}
I(x)=\displaystyle\sup_{\blambda \in \mathbb{R}^{d}}\{\langle \bx, \blambda \rangle -e(\blambda) + 1 \}.
\label{Equ:Def-I}
\end{equation}
Moreover $I(\cdot)$ is convex and a \emph{good rate function}.
\end{theorem}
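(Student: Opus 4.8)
The plan is to derive the LDP from the G\"artner--Ellis theorem applied at speed $\log n$. First I would invoke the fundamental representation \eqref{Equ:Representation}: $Z_n \ed Z_0 + \sum_{j=1}^n I_j X_j$ with $\left(X_j\right)$, $\left(I_j\right)$ and $Z_0$ mutually independent, $I_j \sim \mathrm{Bernoulli}\left(\tfrac{1}{j+1}\right)$. Since $U_0$ is finitely supported, $Z_0$ is a bounded random vector, so for each fixed $\blambda \in \Rbold^d$ the quantity $\bE\left[e^{\langle \blambda, Z_0\rangle}\right]$ is a finite constant independent of $n$ and will be negligible on the scale $\log n$; hence it is enough to control $\sum_{j=1}^n I_j X_j$.

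Next I would compute the normalized limiting logarithmic moment generating function. By independence and $\bE\left[e^{\langle \blambda, I_j X_j\rangle}\right] = \bP(I_j=0) + \bP(I_j=1)\,e(\blambda) = 1 + \tfrac{e(\blambda)-1}{j+1}$,
\[
\bE\left[e^{\langle \blambda, Z_n\rangle}\right] = \bE\left[e^{\langle\blambda, Z_0\rangle}\right]\prod_{j=1}^n\left(1 + \frac{e(\blambda)-1}{j+1}\right).
\]
Writing $c = e(\blambda)-1$ (note $1 + c/(j+1) = \bE[e^{\langle\blambda,I_jX_j\rangle}] > 0$ for all $j$), the expansion $\log(1+c/(j+1)) = c/(j+1) + O(1/j^2)$ together with $\sum_{j=1}^n \tfrac{1}{j+1} = \log n + O(1)$ yields $\log\bE\left[e^{\langle\blambda,Z_n\rangle}\right] = (e(\blambda)-1)\log n + O(1)$, and therefore
\[
\Lambda(\blambda) := \lim_{n\to\infty}\frac{1}{\log n}\log\bE\left[e^{\langle\blambda, Z_n\rangle}\right] = e(\blambda) - 1, \qquad \blambda\in\Rbold^d .
\]
Because $B$ is a finite set, $e(\cdot)$ — hence $\Lambda(\cdot)$ — is finite and real-analytic on all of $\Rbold^d$; in particular its effective domain is $\Rbold^d$, so $\Lambda$ is lower semicontinuous and the steepness (essential smoothness) hypothesis holds vacuously, the domain having empty boundary. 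The G\"artner--Ellis theorem then gives the full LDP \eqref{Equ:LDP} at speed $\log n$ with rate function $\Lambda^*(\bx) = \sup_{\blambda}\{\langle \bx,\blambda\rangle - \Lambda(\blambda)\} = I(\bx)$ as in \eqref{Equ:Def-I}.

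Finally, $I$ is convex as a pointwise supremum of affine functions, and since $\bzero$ lies in the interior of the effective domain of $\Lambda$ (which is all of $\Rbold^d$), the standard criterion — or directly, superlinearity of $I$ via $I(\bx) \ge \langle \bx,\blambda\rangle - \Lambda(\blambda)$ for all $\blambda$ near the origin — shows that $I$ has compact sublevel sets, i.e.\ is a good rate function. The only genuinely delicate point is the bookkeeping in the second step: verifying that the $O(1)$ errors (the Taylor remainder of $\log(1+\cdot)$, the harmonic-sum approximation, and the bounded $Z_0$ contribution) are $o(\log n)$, which is routine since $\blambda$ is held fixed. I would also record the probabilistic meaning of $\Lambda(\blambda)=e(\blambda)-1$: $\sum_{j=1}^n I_j$ is a sum of independent $\mathrm{Bernoulli}\left(\tfrac{1}{j+1}\right)$ variables with mean $\sim \log n$, close to a $\mathrm{Poisson}(\log n)$ count, so $\sum_{j=1}^n I_j X_j$ behaves like the value at ``time'' $\log n$ of a marked Poisson process of unit intensity with i.i.d.\ markings distributed as $X_1$, whose normalized cumulant generating function is precisely $e(\blambda)-1$; this is the explicit marked-Poisson description of $I$ mentioned in the introduction.
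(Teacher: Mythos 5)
Your proposal is correct and follows essentially the same route as the paper: the representation \eqref{Equ:Representation}, the limit $\frac{1}{\log n}\log\bE\left[e^{\langle \blambda, Z_n\rangle}\right] \to e(\blambda)-1$ for every $\blambda \in \Rbold^d$, the G\"artner--Ellis theorem at speed $\log n$, convexity of the Fenchel--Legendre dual, and compactness of level sets via a linear-growth lower bound on $I$. The only difference is in one step: where you establish the limit by an elementary expansion of $\log\left(1+\frac{e(\blambda)-1}{j+1}\right)$ against the harmonic sum, the paper writes $\bE\left[e^{\langle \blambda, Z_n\rangle}\right]=\frac{1}{n+1}\Pi_n\left(e(\blambda)\right)$ and invokes Gauss's formula $\Pi_n(z)\Gamma(z+1)/n^z \to 1$; both verifications are valid (and yours is self-contained).
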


\begin{proof}
We start with the representation \eqref{Equ:Representation}
\[
Z_n \ed Z_0 + \sum_{j=1}^n I_j X_j
\]
where as earlier
$\left(X_j\right)_{j \geq 1}$ are i.i.d. increments of the random walk $\left(S_n\right)_{n \geq 0}$ on
$\Zbold^d$ and 
$\left(I_j\right)_{j \geq 1}$ are independent Bernoulli variables such that 
$I_j \sim \mbox{Bernoulli}\left(\frac{1}{j+1}\right)$ and are independent of $\left(X_j\right)_{j \geq 1}$.
$Z_0 \sim U_0$ and is independent of $\left(\left(X_j\right)_{j \geq 1}, \left(I_j\right)_{j \geq 1}\right)$.
Now without loss of any generality we may assume that $Z_0 = \bzero$ with probability one, that is, 
$U_0 = \delta_{\bzero}$.

Consider the following scaled \emph{logarithmic moment generating function} of $Z_n$,
\begin{equation}
\Lambda_{n}\left(\blambda\right) := \frac{1}{\log n}\log \mathbb{E}\left[e^{\langle \blambda, Z_{n}\rangle}\right].
\label{Equ:log-MGF}
\end{equation}
From \eqref{Equ:Representation} it follows that 
\[
\mathbb{E}\left[e^{\langle \blambda, Z_{n}\rangle}\right]=\frac{1}{n+1}\Pi_{n}\left(e\left(\blambda\right)\right)
\] 
where $\Pi_{n}\left(z\right)=\prod_{j=1}^{n}\left(1+\frac{z}{j}\right)$, $z \in \mathbb{C}$.
Using Gauss's formula (see page 178 of \cite{Con78}) we have  
\begin{equation}
\lim_{n\to \infty} \frac{\Pi_{n}(z)}{n^{z}}\Gamma(z+1)=1
\label{Equ:Gauss}
\end{equation} 
and the convergence happens uniformly on compact subsets of $\mathbb{C}\setminus\{-1,-2,\ldots\}$.
Therefore we get
\begin{equation}
\Lambda_{n}\left(\blambda\right) \longrightarrow e\left(\blambda\right) - 1 < \infty \,\,\, \forall \,\,
\blambda \in \Rbold^d.
\end{equation}
Thus the LDP as stated in \eqref{Equ:LDP} follows from the G\"artner-Ellis Theorem 
(see Remark (a) on page 45 of \cite{DeZe1993} or page 66 of \cite{ThesisArijit2010}).

We next note that 
$I(\cdot)$ is a convex function because it is the Fenchel-Legendre dual of 
$e\left(\blambda\right)-1$ which is finite for all $\blambda \in \mathbb{R}^{d}$.

Finally, we will show that $I\left(\cdot\right)$ is good rate function, that is, 
the level sets $A\left(\alpha \right)=\{\bx\colon I(\bx)\leq \alpha\}$ are compact for all 
$\alpha > 0$. 
Since $I$ is a rate function so by definition it is lower semicontinuous. So it is enough to prove that 
$A(\alpha)$ is bounded for all $\alpha \in \Rbold$. 

Observe that for all $\bx \in \mathbb{R}^{d}$,
\[
I(\bx) \geq \sup_{\| \blambda \| = 1}\left\{\langle \bx,\blambda \rangle-e(\blambda)+1\right\}.
\]
Now the function $\blambda \mapsto e\left(\blambda\right)$ 
is continuous and $\left\{\blambda \colon \| \blambda \| = 1 \right\}$ 
is a compact set. So $\exists$ $\blambda_{0} \in \left\{\blambda \colon \| \blambda \| = 1\right\}$ 
such that $\sup_{\vert\blambda\vert=1} e\left(\blambda\right) = e\left(\blambda_{0}\right)$. 
Therefore  for $\| \bx \| \neq 0$ choosing $\blambda=\frac{\bx}{\| \bx \|}$, we have 
$ I(\bx) \geq \| \bx \| -e\left(\blambda_{0}\right)+1$. So
if $\bx\in A(\alpha)$ then
\[
\| \bx \| \leq \left(\alpha +e \left(\blambda _{0}\right)-1\right).
\]
This proves that the level sets are bounded, which completes the proof.
\end{proof}

Our next result is an easy consequence of \eqref{Equ:Def-I} which can be used to compute explicit
formula for the rate function $I$ in many examples in one or higher dimensions.
\begin{theorem}
\label{Thm:I-Formula}
The rate function $I$ is same as the rate function for the large deviation of the
empirical means of i.i.d. random vectors with distribution 
corresponding to the distribution of the following random vector
\begin{equation}
W = \sum_{i=1}^N X_i,
\label{Equ:I-Representation}
\end{equation}
where $N \sim \mbox{Poisson}\left(1\right)$ and is independent of 
$\left(X_j\right)_{j \geq 1}$ which are the i.i.d. increments of the associated random walk. 
\end{theorem}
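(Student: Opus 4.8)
The plan is to compute the logarithmic moment generating function of the compound-Poisson vector $W$ in \eqref{Equ:I-Representation} and observe that it coincides with $e(\blambda) - 1$, so that the claim follows from Cram\'er's theorem together with the uniqueness of the rate function in the G\"artner--Ellis / Cram\'er setting. Concretely, I would first recall that for $W = \sum_{i=1}^N X_i$ with $N \sim \mbox{Poisson}(1)$ independent of the i.i.d.\ sequence $(X_j)_{j \geq 1}$, conditioning on $N$ gives
\begin{equation}
\bE\left[e^{\langle \blambda, W\rangle}\right] = \sum_{k=0}^\infty \frac{e^{-1}}{k!}\, e(\blambda)^k = e^{-1} e^{e(\blambda)} = \exp\left(e(\blambda) - 1\right),
\label{Equ:Compound-Poisson-MGF}
\end{equation}
which is finite for every $\blambda \in \Rbold^d$ since $X_1$ is bounded. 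Hence the logarithmic moment generating function of $W$ is exactly $\Lambda_W(\blambda) = e(\blambda) - 1$.

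Next I would invoke Cram\'er's theorem in $\Rbold^d$ (see, e.g., \cite{DeZe1993}): since $\Lambda_W$ is finite in a neighbourhood of the origin (indeed everywhere), the empirical means $\frac{1}{m}\sum_{i=1}^m W^{(i)}$ of i.i.d.\ copies of $W$ satisfy an LDP with speed $m$ and good rate function equal to the Fenchel--Legendre transform of $\Lambda_W$, namely
\[
I_W(\bx) = \sup_{\blambda \in \Rbold^d}\left\{\langle \bx, \blambda\rangle - \Lambda_W(\blambda)\right\} = \sup_{\blambda \in \Rbold^d}\left\{\langle \bx, \blambda\rangle - e(\blambda) + 1\right\}.
\]
Comparing with the definition \eqref{Equ:Def-I}, we see $I_W = I$, which is precisely the assertion of the theorem. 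One should also note for completeness that the boundedness of $X_1$ forces $W$ to have all exponential moments, so there is no integrability obstruction and Cram\'er's theorem applies in its cleanest form.

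I do not expect a serious obstacle here; the proof is essentially a one-line moment-generating-function identity plus a citation. The only point that requires a word of care is the precise meaning of the statement: what is being claimed is the equality of two \emph{rate functions} (the one governing $\frac{Z_n}{\log n}$ and the Cram\'er rate function for empirical means of copies of $W$), not a distributional identification of the processes themselves; and the bridge between the two is exactly that both are the Fenchel--Legendre dual of the same convex function $e(\blambda) - 1$, a fact already established for the left-hand side in the proof of Theorem~\ref{Thm:LDP}. So the argument reduces to verifying \eqref{Equ:Compound-Poisson-MGF} and quoting the Cram\'er representation of the rate function; both the convexity and the goodness of $I$ have already been recorded in Theorem~\ref{Thm:LDP}.
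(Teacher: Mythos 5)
Your proposal is correct and follows essentially the same route as the paper: the paper's proof also consists of observing that $\log \bE\left[e^{\langle \blambda, W \rangle}\right] = e\left(\blambda\right) - 1$ and then citing Cram\'er's Theorem together with \eqref{Equ:Def-I}. You merely spell out the conditioning-on-$N$ computation of the compound-Poisson moment generating function, which the paper leaves implicit.
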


\begin{proof}
We first observe that 
$\log \bE\left[e^{\langle \blambda, W \rangle}\right] = e\left(\blambda\right) - 1$. The rest
then follows from \eqref{Equ:Def-I} and Cram\'er's Theorem (see Theorem 2.2.30 of \cite{DeZe1993}).  
\end{proof}

\begin{rem}
Using Theorem \ref{Thm:I-Formula} we can conclude that   
the tail of the asymptotic distribution of $Z_n$ can be approximated by the tail of a 
marked Poisson process with intensity one where the markings are given by the i.i.d. 
increments of the associated random walk. 
\end{rem}

For $d=1$, one can get more information about the rate function $I$, in particular 
following result it follows from Theorem \ref{Thm:I-Formula} and Lemma 2.2.5 of \cite{DeZe1993}.
\begin{prop}
\label{Cor:LDP1}
Suppose $d=1$ then $I(x)$ is non-decreasing when $x \geq \mu$ and non-increasing when $x \leq \mu$. Moreover 
\begin{equation}
I(x) = \begin{cases}
       \displaystyle{\sup _{\lambda \geq 0}\{x\lambda -e(\lambda)+1\}} & \mbox{if\ \ } x \geq \mu \\
       \displaystyle{\sup _{\lambda \leq 0}\{x\lambda -e(\lambda)+1\}} & \mbox{if\ \ } x \leq \mu. 
       \end{cases}
\label{incr}
\end{equation}
In particular, $I(\mu)=\inf_{x \in \mathbb{R}} I(x)$.
\end{prop}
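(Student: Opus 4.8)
The plan is to invoke Theorem~\ref{Thm:I-Formula}, which identifies $I$ with the Cram\'er rate function of the i.i.d.\ random variable $W = \sum_{i=1}^N X_i$ with $N \sim \mbox{Poisson}(1)$ independent of the $X_i$, and then to quote the general one-dimensional structural result for Cram\'er rate functions, namely Lemma~2.2.5 of \cite{DeZe1993}. That lemma states that for a real random variable with finite log-moment generating function, the Cram\'er rate function $I$ is convex, attains its minimum value $0$ at the mean, is non-increasing to the left of the mean and non-decreasing to the right, and moreover admits the one-sided variational representations obtained by restricting the supremum over $\lambda$ to $\lambda \geq 0$ (for $x$ to the right of the mean) and to $\lambda \leq 0$ (for $x$ to the left). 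So the only thing I need to check before citing it is that $\bE[W] = \mu$, which is immediate from Wald's identity since $\bE[N] = 1$ and $\bE[X_1] = \mu$: $\bE[W] = \bE[N]\bE[X_1] = \mu$.

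First I would record that $\Lambda_W(\lambda) := \log \bE[e^{\lambda W}] = e(\lambda) - 1$ for all $\lambda \in \Rbold$ (this is exactly the computation already made in the proof of Theorem~\ref{Thm:I-Formula}, using independence of $N$ and the $X_i$ and the Poisson moment generating function), so in particular $\Lambda_W$ is finite everywhere and $W$ has all exponential moments. Then $\bE[W] = \Lambda_W'(0) = e'(0) = \mu$. Second, I would apply Lemma~2.2.5 of \cite{DeZe1993} to $W$: its conclusions are precisely that $I$ is non-increasing on $(-\infty,\mu]$, non-decreasing on $[\mu,\infty)$, that the restricted suprema in \eqref{incr} coincide with $I(x)$ on the respective half-lines, and consequently that $I(\mu) = 0 = \inf_{x\in\Rbold} I(x)$.

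The justification for the restricted-supremum formulas is the monotonicity of the map $\lambda \mapsto x\lambda - \Lambda_W(\lambda)$ on one side of $0$: for $x \geq \mu$, the derivative $x - \Lambda_W'(\lambda)$ is at least $x - \mu \geq 0$ at $\lambda = 0$, and since $\Lambda_W$ is convex with $\Lambda_W'$ non-decreasing, the function $\lambda \mapsto x\lambda - \Lambda_W(\lambda)$ is non-decreasing on $(-\infty,0]$, hence the supremum over all $\lambda$ is the same as the supremum over $\lambda \geq 0$; the case $x \leq \mu$ is symmetric. This is the one genuinely substantive point, and it is exactly what Lemma~2.2.5 of \cite{DeZe1993} packages, so I expect no real obstacle — the entire proof reduces to verifying $\bE[W] = \mu$ and quoting the cited lemma. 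The final assertion $I(\mu) = \inf_x I(x)$ then follows because a convex function that is non-increasing on $(-\infty,\mu]$ and non-decreasing on $[\mu,\infty)$ attains its global minimum at $\mu$.
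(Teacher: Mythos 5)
Your proposal is correct and follows exactly the route the paper takes: the paper offers no separate proof but states that the proposition ``follows from Theorem \ref{Thm:I-Formula} and Lemma 2.2.5 of \cite{DeZe1993},'' which is precisely your argument, with the only added (and correct) detail being the verification $\bE[W]=\Lambda_W'(0)=e'(0)=\mu$.
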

%

Following is an immediate corollary of the above result and Theorem \ref{Thm:LDP}.
\begin{cor}\label{Th: CramerLDP}
Let $d=1$ then for any $\eps > 0$
\begin{equation}
\label{Eq: lim}
\lim_{n \to \infty} \frac{1}{\log n} \log \bP \left(\frac{Z_{n}}{\log n}\geq \mu + \eps \right) 
= - I\left( \mu + \eps \right)
\end{equation}
and
\begin{equation}
\lim_{n \to \infty} \frac{1}{\log n} \log \bP \left(\frac{Z_{n}}{\log n} \leq \mu - \eps \right)
= -I\left( \mu - \eps \right).
\end{equation}
\end{cor}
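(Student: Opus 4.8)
The plan is to deduce Corollary~\ref{Th: CramerLDP} from the LDP of Theorem~\ref{Thm:LDP} together with the monotonicity information about $I$ furnished by Proposition~\ref{Cor:LDP1}. Fix $\eps>0$ and set $A=[\mu+\eps,\infty)$. Then $A$ is closed, $A^{\circ}=(\mu+\eps,\infty)$, and $\bar A=A$. Applying the two inequalities in \eqref{Equ:LDP} to this set gives
\[
-\inf_{x>\mu+\eps} I(x)\ \le\ \mathop{\underline{\lim}}\limits_{n\to\infty}\frac{\log\bP(Z_n/\log n\ge\mu+\eps)}{\log n}\ \le\ \mathop{\overline{\lim}}\limits_{n\to\infty}\frac{\log\bP(Z_n/\log n\ge\mu+\eps)}{\log n}\ \le\ -\inf_{x\ge\mu+\eps} I(x).
\]
So it suffices to show that the upper and lower bounds coincide, i.e.\ that $\inf_{x>\mu+\eps} I(x)=\inf_{x\ge\mu+\eps} I(x)=I(\mu+\eps)$.

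The key point is that by Proposition~\ref{Cor:LDP1}, $I$ is non-decreasing on $[\mu,\infty)$, and $\mu+\eps\ge\mu$. Hence on the ray $[\mu+\eps,\infty)$ the function $I$ attains its infimum at the left endpoint, giving $\inf_{x\ge\mu+\eps}I(x)=I(\mu+\eps)$. For the open ray, monotonicity gives $\inf_{x>\mu+\eps}I(x)=\lim_{x\downarrow\mu+\eps}I(x)$; since $I$ is a good (hence lower semicontinuous) rate function and is convex, it is in fact continuous on the interior of the interval where it is finite, so this limit equals $I(\mu+\eps)$ whenever $I(\mu+\eps)<\infty$. (If $I(\mu+\eps)=+\infty$, then by monotonicity $I\equiv+\infty$ on $[\mu+\eps,\infty)$, both infima are $+\infty$, and the claimed limit is $-\infty$, which is still consistent with \eqref{Eq: lim}.) Squeezing the liminf and limsup between two quantities both equal to $-I(\mu+\eps)$ yields \eqref{Eq: lim}. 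The argument for the lower tail is symmetric, taking $A=(-\infty,\mu-\eps]$ and using that $I$ is non-increasing on $(-\infty,\mu]$, so that its infimum over $(-\infty,\mu-\eps]$ is again attained at the endpoint $\mu-\eps$.

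I expect the only delicate point to be the justification that the infimum over the \emph{open} ray equals the boundary value, i.e.\ ruling out a downward jump of $I$ at $x=\mu+\eps$. This is handled cleanly by invoking convexity of $I$ (established in Theorem~\ref{Thm:LDP}): a finite convex function on $\Rbold$ is continuous, and even if $I(\mu+\eps)$ sits on the boundary of $\{I<\infty\}$, lower semicontinuity of the good rate function together with monotonicity forces $\lim_{x\downarrow\mu+\eps}I(x)=I(\mu+\eps)$. No new estimates are needed; the corollary is essentially a bookkeeping exercise combining the general LDP with the one-dimensional structural facts about $I$.
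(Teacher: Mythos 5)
Your overall strategy --- sandwiching the half-line probability between the two bounds of \eqref{Equ:LDP} applied to $A=[\mu+\eps,\infty)$ and then collapsing the two infima to $I(\mu+\eps)$ via the monotonicity in Proposition \ref{Cor:LDP1} --- is exactly what the paper intends (it offers no proof beyond declaring the corollary immediate), and for the closed-set upper bound, and for the open-set lower bound in the generic situation, your bookkeeping is correct.

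The one step that genuinely fails is your justification that $\inf_{x>\mu+\eps}I(x)=I(\mu+\eps)$ when $\mu+\eps$ sits on the boundary of the effective domain $\{I<\infty\}$ with $I(\mu+\eps)<\infty$. Lower semicontinuity gives $\liminf_{x\to x_0}I(x)\ge I(x_0)$ and monotonicity gives $\lim_{x\downarrow\mu+\eps}I(x)\ge I(\mu+\eps)$; both inequalities point in the wrong direction, and convexity yields continuity only on the \emph{interior} of the effective domain. This boundary case really occurs: by Theorem \ref{Thm:I-Formula} the relevant law is that of $W=\sum_{i=1}^N X_i$, which is bounded above precisely when $X_1\le 0$ a.s., and then $I(0)=1-\bP(X_1=0)<\infty$ while $I\equiv+\infty$ on $(0,\infty)$. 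Symmetrically, for the lower-tail statement take the paper's own Example 1 ($X_1=1$ a.s.) with $\eps=\mu=1$: there $I(0)=1$ but $I(x)=+\infty$ for $x<0$, so the LDP lower bound over the open ray $(-\infty,0)$ gives only $-\infty$ and the sandwich is not tight. The corollary is still true at such points --- e.g.\ $\bP(Z_n\le 0)=\bP(Z_n=0)\ge\prod_{j=1}^n\frac{j}{j+1}=\frac{1}{n+1}$, matching $-I(0)=-1$ --- but this is the familiar boundary phenomenon in Cram\'er's theorem on $\Rbold$ (Theorem 2.2.3(c) of the Dembo--Zeitouni reference) and requires a direct change-of-measure or combinatorial lower bound rather than the abstract G\"artner--Ellis LDP. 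Away from this degenerate configuration (in particular whenever the increments take values of both signs, or whenever $\mu\pm\eps$ avoids the single finite boundary point of $\{I<\infty\}$) your argument is complete and coincides with the paper's intended one.
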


We end the section with explicit computations of the rate functions for two examples of 
infinite color urn models
associated with random walks on one dimensional integer lattice. 
\begin{Example} 
Our first example is the case when the random walk is trivial, which moves deterministically one step at 
a time. In other words $X_1 = 1$ with probability one. In this case $\mu = 1$ and $\sigma^2=1$. Also
the moment generating function of $X_1$ is given by 
$e\left(\lambda\right) := e^{\lambda}$, $\lambda \in \Rbold$. 
By Theorem \ref{Thm:I-Formula} 
the rate 
function for the associated infinite color urn model is same as the rate function for a Poisson random
variable with mean $1$, that is
\begin{equation}
I(x)=
\begin{cases}
+ \infty & \text{\ if\ } x < 0 \\
1 & \text{\ if\ } x = 0 \\
x \log x - x + 1 & \text{\ if\ } x > 0
\end{cases}
\end{equation}
Thus for this example one can prove a \emph{Poisson approximation} for $Z_n$. 
\end{Example}

\begin{Example} 
Our next example is the case when the random walk is the \emph{simple symmetric random walk} on the one 
dimensional integer lattice. For this case we note that $\mu = 0$, $\sigma^2 = 1$ and the 
moment generating function $X_1$ is 
$e\left(\lambda\right) = \cosh \lambda$, $\lambda \in \Rbold$. The rate 
function for the associated infinite color urn model turns out to be 
\begin{equation}
I(x)=x \sinh^{-1} x - \sqrt{1+x^2} + 1.
\end{equation}
\end{Example}

\bibliographystyle{plain}

\bibliography{Berry-Essen}

\end{document}